\documentclass[a4paper,11pt]{amsart}
\usepackage{amssymb,amsmath,amsfonts,amsthm}
\usepackage[english]{babel}
\usepackage[yyyymmdd]{datetime}
\usepackage{enumerate,expdlist}
\usepackage{tikz}
\usetikzlibrary{patterns}
\usepackage{caption}
\usepackage{subcaption}

\usepackage{relsize} 
\newcommand{\euler}{\mathrm{e}}
\newcommand{\RR}{\mathbb{R}}

\newcommand{\NN}{\mathbb{N}}    
\newcommand{\ZZ}{\mathbb{Z}}

\newcommand{\TT}{\mathbb{T}}

\newcommand{\vol}{\operatorname{Vol}}
\newcommand{\cF}{\mathcal{F}}

\newcommand{\cD}{\mathcal{D}}

\newcommand{\drm}{\mathrm{d}}

\newcommand{\chiS}{\mathbf{1}_S}
\newcommand{\gap}{\operatorname{gap}}

\newcommand{\V}{\mathcal{V}}

\newtheorem{theorem}{Theorem}
\newtheorem{lemma}[theorem]{Lemma}
\newtheorem{proposition}[theorem]{Proposition}
\newtheorem{corollary}[theorem]{Corollary}

\theoremstyle{definition}
\newtheorem{definition}[theorem]{Definition}
\newtheorem{remark}[theorem]{Remark}
\usepackage{microtype}
\usepackage{ellipsis}
\usepackage{typearea}
\usepackage[textsize=tiny,shadow]{todonotes}
\begin{document}
\title[Controllability of Schr\"odinger equation]{Controllability of the Schr\"odinger equation on unbounded domains without geometric control condition}
 \author{Matthias T\"aufer}
 \address[MT]{FernUniversit\"at in Hagen, Fakult\"at f\"ur Mathematik und Informatik}
 \keywords{Controllability, Schr\"odinger equation, Observability Estimate, Floquet-Bloch theory}
 \subjclass[2010]{93Bxx, 35Q93}
 %
 %
 
 \begin{abstract}
 We prove controllability of the Schr\"odinger equation in $\RR^d$ in any time $T > 0$ with internal control supported on nonempty, periodic, open sets.
  This demonstrates in particular that controllability of the Schr\"odinger equation in full space holds for a strictly larger class of control supports than for the wave equation and suggests that the control theory of Schr\"odinger equation in full space might be closer to the diffusive nature of the heat equation than to the ballistic nature of the wave equation.
  Our results are based on a combination of Floquet-Bloch theory with Ingham-type estimates on lacunary Fourier series.
 \end{abstract}

\maketitle

\section{Controllability of the Schr\"odinger equation in $\RR^d$}

	Consider the Schr\"odinger equation in $\RR^d$ with internal control
\begin{equation}
	\label{eq:Schroedinger_controlled}
	\begin{cases}
	i \frac{\drm}{\drm t} u + \Delta u = \chiS f
	& \quad
	\text{in $\RR^d \times [0,T]$},\\
	u(0) = u_0
	& \quad
	\in L^2 (\RR^d)\\
	\end{cases}
\end{equation}
where $S \subset \RR^d$ is the \emph{control set}, 
$\chiS$ its indicator function, and $f \in L^2(S \times (0,T))$ is called the \emph{control}.
For any choice of $f$ and $u_0$, the solution of~\eqref{eq:Schroedinger_controlled} is given by the Duhamel formula
\begin{equation*}
	\label{eq:Duhamel}
	u(t)
	=
	\euler^{i t \Delta} u_0
	+
	\int_0^t \euler^{i (t - s) \Delta} \chiS f(s) \drm s
	.
\end{equation*}

\begin{definition}
	Equation~\eqref{eq:Schroedinger_controlled} is called \emph{controllable in time $T > 0$} if for all $u_0, u_T \in L^2(\RR^d)$ there exists $f \in L^2(S \times (0,T))$ such that the solution of~\eqref{eq:Duhamel} satisfies $u(T) = u_T$.  
\end{definition}

Our main result is:

\begin{theorem}
	\label{thm:control_periodic}
	Let $S \subset \RR^d$ be nonempty, periodic and open.
	Then~\eqref{eq:Schroedinger_controlled} is controllable in any time $T > 0$.
\end{theorem}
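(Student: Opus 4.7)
By the standard Hilbert Uniqueness Method, controllability of~\eqref{eq:Schroedinger_controlled} in time $T$ is equivalent to the dual observability estimate
\[
\|\phi_0\|_{L^2(\RR^d)}^2 \leq C_T \int_0^T \int_S |\euler^{it\Delta} \phi_0|^2 \, \drm x \, \drm t
\]
for all $\phi_0 \in L^2(\RR^d)$. The entire proof therefore reduces to this inequality. The structural input I would exploit is the periodicity of $S$: let $\Lambda \subset \RR^d$ be a lattice of periods of $S$, with fundamental cell $\Omega$ and Brillouin zone $\Omega^\ast$ (a fundamental cell of the dual lattice). The Floquet-Bloch transform is an isometry $L^2(\RR^d) \to L^2(\Omega^\ast; L^2(\Omega))$ that simultaneously fibers both operators in sight. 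Namely, $-\Delta$ becomes the direct integral of the twisted Laplacians $H(k) = -(\nabla + ik)^2$ on $L^2(\Omega)$, whose pure point spectrum is $\{|2\pi n + k|^2\}_{n \in \Lambda^\ast}$ with plane-wave eigenfunctions; and crucially, because $S$ is $\Lambda$-periodic, multiplication by $\chiS$ becomes multiplication by $\chi_{S \cap \Omega}$ on each fiber. Plancherel then reduces the global observability to its fiberwise analogue on $\Omega$, provided the fiber constant is uniform in $k \in \Omega^\ast$.

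\textbf{Fiber observability via Ingham.} On each Bloch fiber, the problem is the observability of the twisted Schrödinger equation $i\partial_t \phi + H(k)\phi = 0$ on $\Omega$ from the fixed nonempty open set $S \cap \Omega$. Solutions expand as nonharmonic Fourier series
\[
\phi_k(x,t) = \sum_{n \in \Lambda^\ast} c_n \, \euler^{-i|2\pi n + k|^2 t}\, \euler^{i(2\pi n + k)\cdot x},
\]
and the required estimate is a lower bound on $(0,T) \times (S \cap \Omega)$ for the coefficients $(c_n)$. I would apply an Ingham-type inequality to the frequencies $\{|2\pi n + k|^2\}$. In dimension one these have uniformly separated gaps and the classical Ingham theorem applies directly. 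In higher dimensions the frequencies cluster, so I would group them into packets of near-coincident values à la Haraux-Jaffard-Komornik, prove an Ingham estimate for the packetised series giving control in terms of the $L^2(S \cap \Omega)$-norms of each packet sum, and then invoke a unique-continuation / quasi-mode argument on $\Omega$ to bound each packet sum from below by the full $\ell^2$-norm of its coefficients — using that a nontrivial finite sum of plane waves cannot vanish on a nonempty open set.

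\textbf{Main obstacle.} The hard part is the $k$-uniformity of this fiber observability constant. Classical torus observability results (Jaffard, Burq-Zworski) correspond to $k=0$ only, and the gap and clustering structure of the shifted spectrum $\{|2\pi n + k|^2\}$ degenerates for many $k \in \Omega^\ast$ when $d \geq 2$: packets may merge or split with $k$, and eigenspaces may jump in dimension at resonant $k$. The technical core of the proof is thus a uniformly-in-$k$ packetisation of the frequencies together with a uniformly-in-$k$ Ingham estimate on the resulting lacunary series, combined with a uniform unique-continuation bound on $\Omega$ for arbitrary finite plane-wave sums with frequencies in $2\pi \Lambda^\ast + k$. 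Once this $k$-uniform fiber observability is established, integration over $\Omega^\ast$ yields the global observability on $\RR^d$ in any time $T > 0$, and hence controllability of~\eqref{eq:Schroedinger_controlled}.
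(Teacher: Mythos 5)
Your reduction via HUM and the Floquet--Bloch fibration---including the observation that $\chi_S$ acts fiberwise because $S$ is periodic and that the fiber constant must be uniform in the quasimomentum---matches the paper exactly up to the fiber observability estimate. At that point, however, your proposed route (packetise the time-frequencies $|2\pi n+k|^2$ into clusters, apply a Haraux--Jaffard--Komornik vector-valued Ingham inequality in $t$, then control each packet by a unique-continuation bound for finite sums of plane waves) leaves the actual difficulty unresolved. In dimension $d\ge 2$ the clusters have unbounded cardinality (the number of $n$ with $|2\pi n+k|^2$ in a unit window grows with the energy), so the qualitative fact that a nontrivial finite sum of plane waves cannot vanish on a nonempty open set yields no uniform constant; what you would actually need is a lower bound for $\|\sum_n c_n e^{i(2\pi n+k)\cdot x}\|_{L^2(S\cap\Omega)}$ in terms of $\|c\|_{\ell^2}$, uniform over all clusters and all $k$, which is a hard spectral-projector/eigenfunction-observability statement (already nontrivial for $k=0$ on the torus, cf.\ Burq--Zworski, Anantharaman--Maci\`a) and is precisely the ``technical core'' you defer rather than prove. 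The $k$-uniform packetisation is equally problematic: for irrational $k$ distinct eigenvalues come arbitrarily close and the cluster structure is unstable in $k$.

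The paper avoids all of this with one structural trick your proposal is missing: it never separates space and time. Writing the fiber solution as $\sum_\gamma \alpha_\gamma\, e^{i(y\cdot\gamma+t|\gamma|^2)}$ with $\gamma\in\frac{\theta}{2\pi}+\ZZ^d$, it treats this as a nonharmonic Fourier series in the $(d+1)$-dimensional variable $z=(y,t)$ with frequencies $\tilde\gamma=(\gamma,|\gamma|^2)\in\RR^{d+1}$. These frequencies are separated by a fixed gap, uniformly in $\theta$, simply because their first $d$ coordinates lie in a translate of $\ZZ^d$---no clustering ever occurs and no unique-continuation input is needed. The multidimensional Ingham--Kahane theorem (in the form of Komornik--Loreti) then gives the lower bound on a small ball $B_R\subset\RR^{d+1}$, and the only remaining work is an elementary, explicit, $\theta$-uniform decomposition of $\tilde\Gamma_\theta$ into finitely many subsets with gaps large enough to allow $R$ (hence the control ball and the time $T$) to be arbitrarily small. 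As written, your fiber step reformulates the problem rather than solving it; replacing the packetisation-plus-unique-continuation scheme by the joint space-time Ingham argument is what closes the gap.
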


\begin{remark}
Since the free Schr\"odinger group $(\euler^{i t \Delta})_{t \in \RR}$ is unitary, controllability in time $T$ is equivalent to \emph{null-controllability in time $T$}, that means that for every $u_0 \in L^2(\RR^d)$ there is a control $f$ such that $u_T = 0$.
Indeed, a null-control for the initial state 
\[
\tilde u_0 := u_0 - \euler^{- i T \Delta} u_T
\]
will send the initial state $u_0$ to $u_T$ in time $T$.
Therefore, from now on, we will focus on null-controllability and assume that the target state $u_T = 0$.
\end{remark}

Let us comment on the implications of Theorem~\ref{thm:control_periodic}:
In recent years, control problems on unbounded domains have gained attention -- on the one hand motivated by applications to kinetic theory~\cite{DickeSV-22}, on the other hand in order to further the geometric understanding of problems on bounded domains by reconciling phenomena on bounded and unbounded domains and using unbounded domains as a proxy for large domains or a family of growing domains where the control set has a particular structure~\cite{TaeuferT-17,Taeufer-diss,NakicTTV-18,NakicTTV-20, WangWZZ-19}.
In parts this has coincided with progress in the understanding of  Anderson localization for random Schr\"odinger operators where conditions on the support of random potentials and the form of randomness causing localization have been succesively relaxed~\cite{TaeuferV-15,NakicTTV-15,TaeuferV-16,TaeuferT-17b, TaeuferV-21}
Thus, also the task of identifying all sets $S \subset \RR^d$ that ensure controllability of controlled systems has recently gained attention -- for the controlled Schr\"odinger equation as well as for its two cousins --
the \emph{controlled wave equation}
\begin{equation}
	\label{eq:Wave_controlled}
	\begin{cases}
	\frac{\drm^2}{\drm t^2} u - \Delta u = \chiS f
	& \quad
	\text{in $\RR^d \times [0,T]$},\\
	\left( (u(0), \frac{\partial}{\partial t} u(0) \right) = (u_0, u_1),
	&
	\\
	\end{cases}
\end{equation}
and the \emph{controlled heat equation}
\begin{equation}
	\label{eq:Heat_controlled}
	\begin{cases}
	\frac{\drm}{\drm t} u - \Delta u = \chiS f
	& \quad
	\text{in $\RR^d \times [0,T]$},\\
	u(0) = u_0.
	& \\
	\end{cases}
\end{equation}
For the latter two, all sets $S$ leading to null-controllability in time $T$ have recently been identified:

\begin{definition}
		A set $S \subset \RR^d$ satisfies the \emph{Geometric Control Condition} (GCC) if there are $\delta, L > 0$ such that every straight line of length $L$ has intersection at least $\delta$ with $S$ (with respect to the one-dimensional Hausdorff measure on the line).
		\\
		A set $S \subset \RR^d$ is called \emph{thick} if there are $\gamma, \rho > 0$ such that $\vol ( S \cap B_\rho(x)) \geq \gamma$ for all $x \in \RR^d$.		
\end{definition}

\begin{figure}[ht]

	\begin{tikzpicture}[scale =.75]
	\begin{scope}[xshift = -6cm]
	\draw[dotted] (-.2, -.2) grid (5.2,5.2);
	\foreach \x in {0.5,1.5,...,4.5}{
  	\foreach \y in {0.5,1.5,...,4.5}{
  	\pgfmathrandominteger{\radius}{1}{10}
    \filldraw[fill=gray!70] (\x,\y) circle (0.25);
  }
}
\draw (2.5,-.5) node {Nonempty, open, periodic.};

\end{scope}

	\begin{scope}[xshift = 0cm]
	\fill[gray!70] (0,0) rectangle (5,5);

	\draw[dotted] (-.2, -.2) grid (5.2,5.2);
	\foreach \x in {0.5,1.5,...,4.5}{
  	\foreach \y in {0.5,1.5,...,4.5}{
    \filldraw[fill=white] (\x,\y) circle (0.35);
  }
}
\draw (2.5,-.5) node {Set satisfying the GCC.};

	\end{scope}

\begin{scope}[xshift=6cm]
  \pgfmathsetmacro{\X}{4};
  \pgfmathsetmacro{\Y}{4};
  \pgfmathsetmacro{\e}{4};
  \pgfmathsetmacro{\d}{4};
  \foreach \y in {0,...,\Y}{
	\foreach \x in {0,...,\X}{
		\pgfmathsetmacro{\A}{random(1,\e)}; 
		\pgfmathsetmacro{\a}{\A^2-1};
                \pgfmathsetmacro{\B}{random(1,\d)};  
		\pgfmathsetmacro{\b}{\B^2-1};
		\pgfmathsetmacro{\Lo}{1/\A^2}; 
		\pgfmathsetmacro{\Lv}{1/\B^2}; 


		\foreach \i in {0,..., \a}{
			  \pgfmathsetmacro{\v}{\x +(\i *\Lo)};
			  \pgfmathsetmacro{\V}{\x +(\i +1)*\Lo};
			  \foreach \j in {0,...,\b}{
				\pgfmathsetmacro{\W}{\y + (\j +1)*\Lv};
				\pgfmathsetmacro{\w}{\y +(\j *\Lv)};
						\pgfmathsetmacro{\test}{\j +\i};
						\ifthenelse{\isodd{\test}}{
						}{\filldraw[black!40] (\v,\w) rectangle (\V, \W);}
			}
		}
		\draw (\x,\y) rectangle (\x+1, \y+1);
	}
  }
\draw (2.5,-.5) node {Generic thick set.};
\end{scope}
\end{tikzpicture}

\caption{Different types of control sets discussed in this article. Note that the nonempty, open, periodic arrangement of balls on the left does not satisfy the GCC since there are arbitrarily long straight lines in its complement.}
\label{fig:sets}

\end{figure}
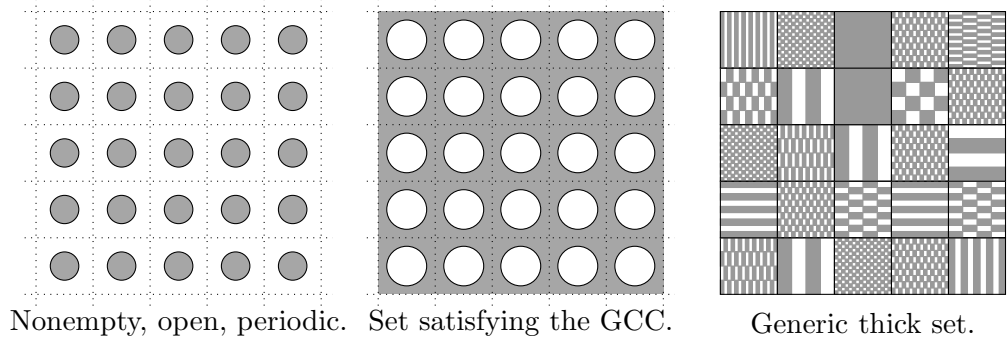

\begin{remark}
	The GCC is a strictly stronger condition that thickness. One example of thick sets not satisfying the GCC are periodic arrangements of balls with small radii as permitted in Theorem~\ref{thm:control_periodic}.
	
	In the literature, one often finds the GCC formulated in terms of geodesics for a Riemannian metric, i.e. tailored to controllability of differential operators of the form $\operatorname{div} A \nabla$ instead of the pure Laplacian, and there are versions for domains or manifolds with non-empty boundary.
	Since we are interested in the pure Laplacian in full space, we refrain from formulating these more general variants.
	Also note that the GCC reflects the ballistic nature of the wave equation whereas thickness reflects the diffusive nature of the heat equation.
\end{remark}

In 2016, Burq and Joly proved that the GCC is the necessary and sufficient criterion on the set $S \subset \RR^d$ for controllability of the wave equation. 

\begin{proposition}[{\cite[Section 6.2]{BurqJ-16}}]
	\label{prop:wave}
	The controlled wave equation in $\RR^d$ is controllable in some time $T$ if and only if $S \subset \RR^d$ satisfies the GCC.
\end{proposition}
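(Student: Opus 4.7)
The plan is to apply the Hilbert Uniqueness Method (HUM), which reduces controllability of \eqref{eq:Wave_controlled} in time $T$ to an observability inequality for the free (adjoint) wave equation, namely that there exists $C > 0$ such that every finite-energy solution $v$ of $(\partial_t^2 - \Delta) v = 0$ satisfies
\[
\|(v_0, v_1)\|_{H^1(\RR^d) \times L^2(\RR^d)}^2
\leq
C \int_0^T \int_S |\partial_t v(t,x)|^2 \, \drm x \, \drm t.
\]
The equivalence then splits into proving this inequality under GCC and disproving it whenever GCC fails.

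For the necessity direction, I would contradict observability by a Gaussian beam (WKB) construction. If GCC fails, for each $n \in \NN$ there is a straight segment of length $n$ whose one-dimensional Hausdorff measure inside $S$ is at most $1/n$. A high-frequency unit-energy solution $v_n$ concentrated microlocally on the bicharacteristic lift of this segment (which in the flat case is uniform translation along the line) has its observation integral tending to zero while the left-hand side stays bounded below, contradicting the inequality.

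For the sufficiency direction I would argue by contradiction using microlocal (H-)defect measures. Given a unit-energy sequence of solutions whose observation norm vanishes, extract a nonzero positive measure $\mu$ on $(0,T) \times T^\ast \RR^d \setminus 0$. Standard propagation arguments then show $\mu$ is invariant under the geodesic flow, which on $\RR^d$ is simply translation in the direction of the momentum covariable, while the vanishing observation forces $\mu$ to be zero above $(0,T) \times S$. Choosing $T$ larger than the GCC parameter $L$, every bicharacteristic projects to a straight line that meets $S$ in positive measure within the time window, so invariance propagates the vanishing everywhere and $\mu \equiv 0$. After disposing of the low-frequency (compact) part via Rellich, this contradicts $\|v_n\|=1$.

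The principal obstacle is the unboundedness of $\RR^d$: defect-measure technology is traditionally developed on compact manifolds, so one must (i) rule out loss of mass at spatial infinity when passing to the weak-$\ast$ limit, and (ii) exploit the spatially \emph{uniform} nature of GCC (with fixed $L$ and $\delta$) to ensure every bicharacteristic is absorbed by $S$ within a time independent of where in $\RR^d$ the concentration occurs. Finite propagation speed localizes the necessity argument, but for sufficiency one must either patch local observability estimates or invoke a spatially uniform version of the propagation lemma for defect measures.
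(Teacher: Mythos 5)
First, note that the paper does not prove this proposition at all: it is imported verbatim from Burq--Joly \cite{BurqJ-16}, so there is no internal proof to compare against. Judged on its own terms, your outline follows the standard (and essentially the cited) strategy: HUM reduction to an observability inequality, Gaussian beams along lines that witness the failure of the GCC for necessity, and propagation of microlocal defect measures for sufficiency. The necessity half is sound in spirit, modulo the usual care that the $1$-dimensional smallness of the line's intersection with $S$ only controls the observation integral after the beam width is sent to zero, i.e.\ one needs a limiting (defect-measure) argument there too rather than a single fixed solution.

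There are, however, two genuine gaps in the sufficiency half as written. First, you propose to ``dispose of the low-frequency (compact) part via Rellich,'' but on $\RR^d$ the embedding $H^1(\RR^d) \hookrightarrow L^2(\RR^d)$ is \emph{not} compact, so the standard compactness--uniqueness step that works on bounded domains simply does not apply; this remainder term must instead be handled by a concentration/translation argument combined with unique continuation, which is precisely one of the points where \cite{BurqJ-16} has to work. Second, and relatedly, the central difficulty you correctly identify --- loss of mass at spatial infinity and the need for a spatially uniform propagation statement --- is flagged as an ``obstacle'' but not resolved. The resolution (roughly: if observability fails, recentre the minimizing sequence by translations, use the uniformity of the GCC parameters $L,\delta$ to ensure the translated configurations still satisfy the GCC, and only then extract and propagate a defect measure) is the actual mathematical content of the proposition; without it your argument is a plan rather than a proof. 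So the approach is the right one, but the two steps above are exactly where the substance lies and they are missing.
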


Note that for the wave equation there will be a -- in general positive -- infimal time $T$ (depending on the parameters $L, \delta$ in the definition of the GCC) in which it is controllable -- a consequence of finite speed of propagation.

As for the heat equation in full space, if null-controllability holds in some time then it holds for all times and, in 2018 and 2019 two articles by Egidi and Veselic~\cite{EgidiV-18} as well as by Wang, Wang, Zhang and Zhang~\cite{WangWZZ-19} independently proved that the necessary and sufficient criterion is thickness.

\begin{proposition}[\cite{EgidiV-18, WangWZZ-19}]
	\label{prop:heat}
	The controlled heat equation in $\RR^d$~\eqref{eq:Heat_controlled} is null-controllable in any time $T > 0$ if and only if $S \subset \RR^d$ is thick.
\end{proposition}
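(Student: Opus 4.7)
The plan is to reduce the null-controllability statement to an observability estimate for the (here self-adjoint) dual equation via the Hilbert Uniqueness Method, and then to establish -- or refute -- that estimate depending on whether $S$ is thick. Concretely, null-controllability of~\eqref{eq:Heat_controlled} in time $T>0$ is equivalent to the existence of $C_T>0$ such that
\begin{equation*}
	\|\euler^{T\Delta} v_0\|_{L^2(\RR^d)}^2 \leq C_T \int_0^T \int_S |\euler^{t\Delta} v_0(x)|^2 \,\drm x\,\drm t
\end{equation*}
for every $v_0 \in L^2(\RR^d)$; this duality is classical.

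For the sufficiency direction, assuming $S$ is thick with parameters $\gamma,\rho>0$, the main ingredient is a Logvinenko--Sereda-type spectral inequality: there exist $C_1,C_2>0$ depending only on $\gamma,\rho$ and $d$ such that
\begin{equation*}
	\|f\|_{L^2(\RR^d)}^2 \leq C_1 \euler^{C_2 \sqrt{\lambda}} \int_S |f(x)|^2 \,\drm x
\end{equation*}
for every $f \in L^2(\RR^d)$ whose Fourier transform is supported in the ball of radius $\sqrt{\lambda}$ about the origin. Combining this with the dissipation estimate $\|\euler^{t\Delta} Q_\lambda v_0\|_{L^2} \leq \euler^{-t\lambda}\|v_0\|_{L^2}$, where $Q_\lambda$ is the spectral projector onto frequencies $|\xi|\geq\sqrt{\lambda}$, and running a Lebeau--Robbiano telescoping argument (cutting $[0,T]$ into dyadic intervals of lengths $\propto 2^{-k}$ and raising the frequency cutoff as $\lambda_k \propto 4^k$), one recovers the observability estimate in arbitrary time $T>0$.

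For the necessity direction, if $S$ is not thick then for every $n \in \NN$ there is a ball $B_n$ of radius $n$ with $\vol(S\cap B_n) \leq n^{-1}\vol(B_n)$. Suitably rescaled and translated Gaussians (or truncated plane waves modulated by a smooth bump) that are essentially supported in $B_n$ and frequency-localised near the origin yield a sequence of initial data for which $\euler^{t\Delta} v_0$ remains concentrated inside $B_n \setminus S$ on the timescale $T$, contradicting any observability estimate with a fixed constant.

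The main obstacle is clearly the Logvinenko--Sereda inequality with the correct $\euler^{C_2\sqrt{\lambda}}$ dependence. A naive application of the classical Logvinenko--Sereda theorem on a cube of side $\sim\sqrt{\lambda}$ produces a constant of the form $\euler^{C\sqrt{\lambda}\log(1+\sqrt{\lambda})}$, and this logarithmic loss is fatal for the Lebeau--Robbiano iteration: it would prevent one from closing the telescoping sum for every $T>0$. Obtaining the sharp square-root exponent -- via Kovrijkine's refinement of Logvinenko--Sereda, or an equivalent Carleman-style good-set/bad-cube decomposition together with Remez-type inequalities for analytic functions -- is the technical heart of both the Egidi--Veseli\'c and Wang--Wang--Zhang--Zhang proofs, and is precisely what makes thickness the sharp geometric condition.
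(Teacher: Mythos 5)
The paper does not actually prove Proposition~\ref{prop:heat}: it is quoted from \cite{EgidiV-18, WangWZZ-19} as background, so there is no internal argument to compare yours against. Your sketch does faithfully reproduce the strategy of those references -- HUM reduction to final-state observability, Kovrijkine's quantitative Logvinenko--Sereda inequality combined with dissipation and a Lebeau--Robbiano iteration for sufficiency, and Gaussians concentrated where $S$ is sparse for necessity -- and as a roadmap it is sound, though the spectral inequality itself, which you rightly identify as the technical heart, is left entirely to the references.

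Two points need correction. First, in the necessity direction you extract from non-thickness only the relative bound $\vol(S\cap B_n)\leq n^{-1}\vol(B_n)$; this is too weak. For a normalized Gaussian of width $\nu$ one only gets $\int_{S\cap B_n}|u|^2\lesssim \nu^{-d/2}\vol(S\cap B_n)$, and $n^{-1}\vol(B_n)\sim n^{d-1}$ does not tend to zero for $d\geq 2$. The negation of thickness as defined in this paper gives the absolute statement: for every $n$ there is $x_n$ with $\vol(S\cap B_n(x_n))<1/n$, and that is what the Gaussian argument (carried out in Section~\ref{sec:thickness_necessary} for the Schr\"odinger case) actually uses. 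Second, your claim that a constant of the form $\euler^{C\sqrt{\lambda}\log(1+\sqrt{\lambda})}$ would be ``fatal'' for the Lebeau--Robbiano iteration is not correct: the telescoping closes for any spectral constant of the form $\euler^{C\lambda^{s}}$ with $s<1$, and $\sqrt{\lambda}\log(1+\sqrt{\lambda})=O(\lambda^{1/2+\varepsilon})$. The sharp $\euler^{C\sqrt{\lambda}}$ dependence of Kovrijkine's version matters for sharp control-cost estimates, but null-controllability in every time $T>0$ would survive a logarithmic loss.
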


Now, the Schr\"odinger equation shares properties both with the wave equation (time-reversability) and with the heat equation (infinite speed of propagation).
It is occasionally described as a chimera of the two or as a \emph{wave equation with infinite speed of propagation}. 
This raises the questions which criterion is necessary and sufficient for controllability of the Schr\"odinger equation.
So far, this has only been answered in dimension one, where the concept of thickness and the GCC happen to coincide. Indeed, the references~\cite{MartinPS-20, HuangWW-22} independently proved that they are the necessary and sufficient criterion for controllability of the one-dimensional Schr\"odinger equation.
 
In higher dimensions, there are only preliminary results for the controlled Schr\"odinger equation so far.
It is known that, if the control set is a complement of a ball, the Schr\"odinger equation is null-controllable~\cite{RosierZ-09}, with explicit estimates on the cost~\cite{WangWZ-19}.
Also, the following theorem stating that the GCC is sufficient for controllability in any dimension sees to be folklore:

\begin{theorem}
	If the control set satisfies the GCC, then the Schr\"odinger equation is null controllable in any time $T > 0$.
\end{theorem}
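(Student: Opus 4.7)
The plan is to argue by duality and then transfer observability from the wave equation to the Schr\"odinger equation via Miller's control transmutation method, avoiding any new PDE argument.

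First, by the standard HUM duality, null-controllability of~\eqref{eq:Schroedinger_controlled} in time $T$ is equivalent to the observability inequality
\[
\|v_0\|_{L^2(\RR^d)}^2 \le C(T) \int_0^T \|\chiS \, \euler^{is\Delta} v_0\|_{L^2(\RR^d)}^2 \, \drm s
\qquad \text{for all } v_0 \in L^2(\RR^d),
\]
where self-adjointness of $-\Delta$ (and unitarity of the Schr\"odinger group, which makes time-reversal free) ensures that the adjoint equation is again a Schr\"odinger equation. So it suffices to prove this observability inequality for every $T > 0$.

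Second, since $S$ satisfies the GCC, Proposition~\ref{prop:wave} together with HUM applied to~\eqref{eq:Wave_controlled} yields a wave observability inequality of the form
\[
\|(w_0, w_1)\|_{H^1(\RR^d) \times L^2(\RR^d)}^2 \le C_W \int_0^{T_W} \|\chiS \, \partial_t w(s)\|_{L^2(\RR^d)}^2 \, \drm s
\]
for every solution $w$ of the free wave equation on $\RR^d$ with data $(w_0, w_1) \in H^1 \times L^2$, where $T_W$ and $C_W$ depend only on the GCC constants $\delta, L$.

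Third, I would invoke Miller's control transmutation method: via the subordination identity
\[
\euler^{is\Delta} v_0 = \int_{\RR} k(s,\sigma) \, \cos\!\bigl(\sigma \sqrt{-\Delta}\bigr) v_0 \, \drm \sigma,
\]
where $k(s,\sigma)$ is an explicit Gaussian-type kernel obtained from the Fourier representation of $\lambda \mapsto \euler^{-is\lambda^2}$, each Schr\"odinger solution is written as a weighted average of wave solutions. Truncating the $\sigma$-integration to a window of length comparable to $T_W$ (using quantitative Gaussian tail bounds on $k$), plugging this representation into the right-hand side of the target Schr\"odinger observability, interchanging integrations, and then using the wave observability inequality on that window yields the desired estimate, at the price of an observability cost $C(T)$ blowing up like $\euler^{C/T}$ as $T \to 0$. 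This blow-up is the analytic counterpart of the infinite speed of propagation of the Schr\"odinger semigroup and is why controllability holds for arbitrarily small $T$.

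The main obstacle is carrying out the transmutation rigorously on the unbounded domain $\RR^d$: the Gaussian kernel has full support, so one must interpret $\cos(\sigma\sqrt{-\Delta})$ via the Fourier transform (the spectrum of $-\Delta$ is purely absolutely continuous), choose the cutoff in $\sigma$ so that the discarded tail is absorbed by the left-hand side, and balance the cutoff width against the Gaussian tail to produce the $\euler^{C/T}$ cost. None of these steps is genuinely new --- they go back to Miller's work on conservative systems and are what makes the result ``folklore'' --- but each of them must be checked with the specific $H^1 \times L^2$ norm of the available wave observability.
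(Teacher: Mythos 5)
Your proposal is correct and follows essentially the same route as the paper: wave controllability under the GCC via Burq--Joly, upgraded to Schr\"odinger controllability in arbitrary time by Miller's transmutation theorem \cite[Theorem~3.1]{Miller-05} with $A=-\Delta$. One small caveat on your sketch of the mechanism: the naive subordination kernel $k(s,\sigma)\sim (4\pi i s)^{-1/2}\euler^{i\sigma^2/(4s)}$ is purely oscillatory with constant modulus in $\sigma$, so there are no Gaussian tail bounds to exploit; Miller instead builds a compactly supported \emph{fundamental controlled solution} of a one-dimensional transmutation problem, which is what produces the $\euler^{C/T}$ cost --- but since you (like the paper) invoke his theorem as a black box, this does not affect the validity of the argument.
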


\begin{proof}
	If $S$ satisfies the GCC then the \emph{wave equation}~\eqref{eq:Wave_controlled} is controllable in some time $T^\ast > 0$ by Proposition~\ref{prop:wave}.
	But a result by Miller immediately turns this into controllability of the Schr\"odinger equation in any time $T > 0$.
	Indeed this is the statement of, \cite[Theorem~3.1]{Miller-05}, with $A = - \Delta$.
	\footnote{Strictly speaking, the application of~\cite[Theorem~3.1]{Miller-05} leads to controllability of the Schr\"odinger equation with a different sign convention $i \frac{\drm}{\drm t} u - \Delta u = \mathbf{1}_S f$, but this merely amounts to a complex conjugation.}
\end{proof}

Likewise, thickness is a known necessary condition:

\begin{theorem}[{Theorem~1.6 in~\cite{MartinPS-20} and Theorem~1.1 in~\cite{HuangWW-22}}]
	\label{thm:thickness_necessary} 
	Let the controlled Schr\"odinger equation be controllable in some time $T > 0$.
	Then $S$ must be thick.
\end{theorem}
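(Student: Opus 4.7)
The plan is to argue by contrapositive, via the Hilbert Uniqueness Method, and to violate the resulting observability inequality with Gaussian coherent-state quasi-modes placed where $S$ is thin. Concretely, the standard HUM duality turns controllability of~\eqref{eq:Schroedinger_controlled} in time $T$ into the observability inequality
\[
\|v_0\|_{L^2(\RR^d)}^2 \leq C_T \int_0^T \int_S |\euler^{i t \Delta} v_0|^2 \drm x \drm t
\qquad (v_0 \in L^2(\RR^d)),
\]
so the task reduces to the following: assuming $S$ is not thick, exhibit a sequence of unit-norm initial data for which the right-hand side vanishes.

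Second, negating thickness with radius $n$ and mass threshold $1/n$, I would extract points $x_n \in \RR^d$ satisfying $\vol(S \cap B_n(x_n)) < 1/n$, and take as test functions the translates $v_n(x) := \phi_0(x - x_n)$ of a fixed unit-norm Gaussian $\phi_0$ of width of order one. The crucial input is the explicit form of $\euler^{i t \Delta} \phi_0$: it remains a (complex-phase) Gaussian, whose modulus squared is a probability density centred at the origin with standard deviation of order $\sqrt{1 + t^2}$. By translation invariance $|\euler^{i t \Delta} v_n|^2$ has the analogous description around $x_n$. Splitting the $S$-integral at $B_n(x_n)$, the interior contribution is bounded by $\|\euler^{i t \Delta} v_n\|_{L^\infty}^2 \cdot \vol(S \cap B_n(x_n)) \lesssim_{d,T} 1/n$, and the exterior contribution by the Gaussian tail $\lesssim \euler^{- c n^2 / (1 + T^2)}$. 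Integrating in time and sending $n \to \infty$ then drives the observability integral to $0$ while $\|v_n\|_{L^2} = 1$, producing the desired contradiction.

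The step I anticipate as the main — though not conceptually deep — obstacle is matching the initial width of the coherent state with the radii of the localisation balls so that the interior $L^\infty$ bound and the exterior Gaussian tail estimate both work on the entire time window $[0, T]$. With initial width of order one and radii $R_n = n$, this goes through comfortably once one writes down the explicit Gaussian formula for $\euler^{it\Delta}\phi_0$; a more delicate balance would be forced only if one wanted to extract an explicit dependence of the observability cost on $T$, which plays no role in the qualitative necessity statement at hand.
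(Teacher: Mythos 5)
Your proposal is correct, and it takes a genuinely different route from the one in the paper. The paper also reduces to the observability inequality via HUM and also uses a translated Gaussian centred in a large ``clearing'' of $S$ (whose existence is exactly the negation of thickness, as in your choice of $x_n$ with $\vol(S\cap B_n(x_n))<1/n$), but it never computes the Schr\"odinger evolution of the Gaussian directly. Instead it writes $\euler^{it\Delta}=\left(\euler^{it\Delta}-\euler^{t\Delta}\right)+\euler^{t\Delta}$, controls the difference on low frequencies via $\lvert \euler^{-i\xi^2 t}-\euler^{-\xi^2 t}\rvert\lesssim \xi^2 t$ after choosing the Gaussian very wide so that its Fourier mass concentrates near $\xi=0$, and then invokes the known heat-equation counterexample of Egidi--Veseli\'c for the remaining term. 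Your argument bypasses the frequency splitting and the comparison with the heat semigroup entirely by using the explicit fact that $\euler^{it\Delta}$ maps a Gaussian to a complex Gaussian whose modulus squared is a probability density of width $\sim\sqrt{1+t^2}$; the interior estimate by $\lVert\cdot\rVert_{L^\infty}^2\cdot\vol(S\cap B_n(x_n))\lesssim_d 1/n$ and the exterior Gaussian-tail estimate $\lesssim \euler^{-cn^2/(1+T^2)}$ then kill the observability integral uniformly on $[0,T]$. What your approach buys is a shorter, fully self-contained proof with a fixed-width coherent state and a single limiting parameter $n$; what the paper's approach buys is the conceptual message that the obstruction is literally the diffusive (heat-equation) behaviour of the low-frequency part of the Schr\"odinger flow, and it recycles the existing heat-equation construction rather than redoing a dispersive computation. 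Both are valid proofs of the necessity of thickness.
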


For the sake of self-containedness we provide a proof in Section~\ref{sec:thickness_necessary}.
It differs from the argument in~\cite{MartinPS-20, HuangWW-22} which rely on works in~\cite{Miller-05} in combination with Kovrijkine's version of the Logvinenko-Sereda theorem~\cite{LogvinenkoS-74,Kovrijkine-00}. 
Instead, we use an elementary construction of explicit counterexamples to an observability inequality for non-thick sets.

\begin{figure}[ht]
	\begin{tikzpicture}
		\begin{scope}
			\draw[thick] (0,0) ellipse (4cm and 2cm);
			\draw[thick, fill = black!20] (0,-1.5) -- (0,1.5) arc (90:270:1.5);
			\draw[dashed, thick, pattern = north east lines, pattern color = black!60] (0,1.5) -- (0,-1.5) arc (-90:90:1.5);

			\draw[fill = white] (-.75,0) node {\footnotesize GCC};
			\draw (.75,.5) node {\footnotesize open,};
			\draw (.75,0) node {\footnotesize nonempty,};
			\draw (.75,-.5) node {\footnotesize periodic};
			
			\draw (-3,0) node {\footnotesize thick};
		\end{scope}
		
		\begin{scope}[xshift = 4.5cm]
			\draw[thick, fill = black!20] (0,-1.25) rectangle (1,-.75);

			\draw[white, fill = black!20] (0,-.25) rectangle (.5,.25);
			\draw[pattern = north east lines, pattern color = black!60] (.5,-.25) rectangle (1,.25);
			\draw[thick] (0,-.25) rectangle (1,.25);		
			\draw[thick] (0,.75) rectangle (1, 1.25);
			
			\draw[anchor = west] (1,1) node {\footnotesize heat equation controllable};
			\draw[anchor = west] (1,0) node {\footnotesize Schr\"odinger equation controllable};
			\draw[anchor = west] (1,-1) node {\footnotesize wave equation controllable};
		\end{scope}
	\end{tikzpicture}
	\caption{Known necessary and sufficient conditions on the control sets $S \subset \RR^d$ for the heat, Schr\"odinger and wave equation.}

\end{figure}
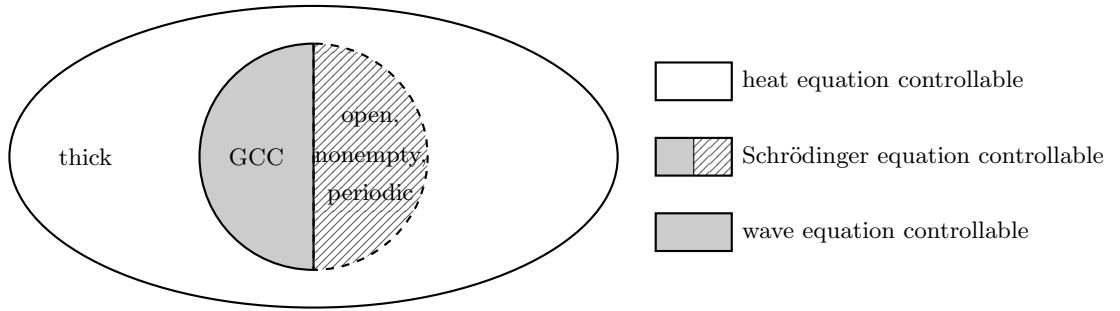

Now, in order to return to the interpretation of Theorem~\ref{thm:control_periodic}, we can summarize our findings in the following 
\begin{corollary}
	The set of all $S \subset \RR^d$ such that the controlled Schr\"odinger equation in $\RR^d$ with control in $S$ is controllable in any time $T > 0$ is contained in the corresponding set for the heat equation (thick sets), but \emph{strictly larger} than the corresponding set for the wave equation (sets satisfying the GCC).
\end{corollary}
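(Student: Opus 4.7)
The plan is to assemble the corollary from the four statements collected in the excerpt, with essentially no further work beyond citing them. First I would note that the inclusion of the Schrödinger-controllable sets inside the heat-controllable sets is immediate from two previous results: Theorem~\ref{thm:thickness_necessary} says that every set $S$ for which~\eqref{eq:Schroedinger_controlled} is controllable must be thick, and Proposition~\ref{prop:heat} identifies thick sets as exactly those for which~\eqref{eq:Heat_controlled} is null-controllable. Composing the two yields the desired containment.

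Next I would establish the other inclusion, namely that every $S$ satisfying the GCC is Schrödinger-controllable. This is exactly the ``folklore'' theorem given just above Theorem~\ref{thm:thickness_necessary}, proved via Miller's transmutation principle from the wave-equation controllability result (Proposition~\ref{prop:wave}). So the class of Schrödinger-controllable sets lies between the GCC class and the thick class.

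Finally, to upgrade the GCC inclusion to a strict one, I would exhibit a witness: a set $S$ that is Schrödinger-controllable but does not satisfy the GCC. The natural choice is any nonempty, open, periodic arrangement of sufficiently small balls, as pictured on the left in Figure~\ref{fig:sets}. By Theorem~\ref{thm:control_periodic} such an $S$ makes~\eqref{eq:Schroedinger_controlled} controllable in every time $T > 0$. On the other hand, if the balls have radius strictly less than half the lattice spacing, the complement of $S$ contains straight lines of arbitrary length (for instance, lines parallel to a lattice direction passing between two rows of balls), so no pair $(L,\delta)$ as in the definition of the GCC can work. This single example produces the strict inequality.

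The only part that really carries content is Theorem~\ref{thm:control_periodic} itself, which is proved elsewhere in the paper; once that, Theorem~\ref{thm:thickness_necessary}, Proposition~\ref{prop:heat} and the folklore result are on the table, the corollary is a one-line bookkeeping argument together with the ball-arrangement witness. There is no genuine obstacle; the only thing to be careful about is to state the witness precisely enough (small radii relative to the period) so that the failure of the GCC is manifest.
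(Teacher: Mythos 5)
Your proposal is correct and is exactly the argument the paper intends: the containment in the thick sets follows from Theorem~\ref{thm:thickness_necessary} together with Proposition~\ref{prop:heat}, the inclusion of the GCC sets follows from the folklore transmutation result, and the strictness is witnessed by a periodic arrangement of small balls, which is controllable by Theorem~\ref{thm:control_periodic} but fails the GCC because its complement contains arbitrarily long lines (as noted in the paper's Remark and in the caption of Figure~\ref{fig:sets}). No gaps.
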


One consequence is that when designing control systems for the Schr\"odinger equation, one has a larger choice for the control support $S \subset \RR^d$ than for the wave equation.

It would be an interesting question to investigate whether the GCC still yields any benefit for controllability of the Schr\"odinger equation -- e.g. on the level of the control cost.
Likewise, the question of identifying the class of \emph{all} sets $S \subset \RR^d$ which lead to controllability of the Schr\"odinger equationin full space remains open.

	We also expect our results not necessarily be restricted to full space.
	Indeed, by reflection techniques, see for instance~\cite{EgidiS-22} for analogous arguments in the case of the heat equation, Theorem~\ref{thm:control_periodic} will also holds for half-spaces, sectors or certain infinite cones.
	Also, infinite strips or slabs can be treated by a modification of the proof of Theorem~\ref{thm:control_periodic} in Section~\ref{sec:Floquet}, taking the Floquet transform only in those directions in which the domain is unbounded.

\section{Proofs}

	Without loss of generality, we may assume that the periodic set $S$ is a union of balls of radius $R > 0$, centered at the lattice points of $2 \pi \ZZ^d$
	\[
	S 
	=
	\bigcup_{k \in 2 \pi \ZZ^d}
	B_R(k).
	\]
	By the Hilbert Uniqueness method \cite{Lions-88}, (null)-controllability in time $T$ is equivalent to the observability inequality
\begin{equation}
	\label{eq:observability}
 	\lVert u \rVert_{L^2(\RR^d)}^2
  	\leq
  	C
  	\int_0^T
  	\lVert \chiS \euler^{i t \Delta} u \rVert_{L^2(\RR^d)}^2 \drm t
  	\quad
  	\text{for all}
  	\quad
  	u \in L^2(\RR^d).
\end{equation}

\subsection{Floquet-Bloch decomposition}
	\label{sec:Floquet}
	We use Floquet-Bloch theory to reduce the observability estimate~\eqref{eq:observability} to a family of observability estimates on compact domains with varying boundary conditions.  
	Let $\TT^d :=  \RR^d / (2 \pi \ZZ^d) \cong (-\pi, \pi]^d$ denote the $d$-dimensional torus.
		
	\begin{definition}
 	Denote by $\cF \colon L^2(\RR^d) \to L^2(\TT^d \times \TT^d)$ the \emph{Floquet transform}
		\begin{align*}
		(\cF u) (y, \theta)
		&=
		{(2 \pi)^{-d/2}}
		\sum_{k \in 2 \pi \ZZ^d} 
		\euler^{i \theta k}
		u(y + k). 
		\end{align*}
	\end{definition}	
	This is an isometric isomorphism that commutes with periodic functions in the sense that if $u \in L^2(\RR^d)$ and $f \in L^\infty(\RR^d)$ is $2 \pi \ZZ^d$-periodic, then
		\[
		\cF (f \cdot u) (y, \theta)
		=
		f \mid_{\TT^d} (y) \cdot (\cF u)  (y, \theta).
		\]
	One also has for $u \in H^2(\RR^d)$
	\[
	\cF (\Delta u) (y, \theta)
	=
	\Delta_\theta
	(\cF u) (y, \theta)
	\]
	for almost all $\theta \in \TT^d$ where $\Delta_\theta$ is the \emph{self-adjoint Laplacian on $\TT^d$ (in the $y$-coordinate) with $\theta$-pseudoperiodic boundary conditions}, that means
	\[
	\cD(\Delta_\theta) 
	= 
	\left\{
	\phi \in L^2(\TT^d) \colon
	\begin{aligned}
	&\text{there exists\ }	
	\tilde \phi \in H^2_{\mathrm{loc}}(\RR^d)\ \text{with}\
	\tilde \phi \mid_{\TT^d} = \phi,\ \text{and}
	\\
	&
	\tilde \phi( x + 2 \pi k) = \exp(i k \cdot \theta) \tilde \phi(x)\
	\text{for all $k \in \ZZ^d$, $x \in \RR^d$}
	\end{aligned}
	\right\}.
	\]
	Indeed, these statements can be seen by straightforward calculations on sufficiently smooth functions and the general case follows by density.
	We also refer to~\cite{Sjoestrand-91, Kuchment-93} for further reading on Floquet theory for partial differential operators.
	In particular, one also has for all $u \in L^2(\RR^d)$
	\[
	\cF (\euler^{i t \Delta} u) (y, \theta)
	=
	\euler^{i t \Delta_\theta}
	(\cF u) (y, \theta).
	\]
	
	The spectral decomposition of the operator $\Delta_\theta$ is well-known:
	\begin{lemma}
		\label{lem:eigenbasis}
		The spectral decomposition of $\Delta_\theta$ is given by the eigenbasis
		\[	
		(\euler^{i \gamma y})_{\gamma \in \Gamma_\theta},
		\text{where}\
		\Gamma_\theta
		:=
		\frac{\theta}{2 \pi} + \ZZ^d
		\]
		with corresponding eigenvalues
		\[
		(\lvert \gamma \rvert^2)_{\gamma \in \Gamma_\theta}
		.
		\]
	\end{lemma}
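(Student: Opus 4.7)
The plan is to verify the two defining properties of a spectral decomposition for a self-adjoint operator: that each $\euler^{i \gamma y}$ lies in $\cD(\Delta_\theta)$ and is an eigenfunction with the stated eigenvalue, and that the family $(\euler^{i\gamma y})_{\gamma \in \Gamma_\theta}$ forms a complete orthogonal system in $L^2(\TT^d)$.

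First, for each $\gamma = \theta/(2\pi) + n$ with $n \in \ZZ^d$, I would set $\phi(y) := \euler^{i\gamma y}$ on $\TT^d$ and consider its smooth extension $\tilde\phi(x) := \euler^{i\gamma x}$ to all of $\RR^d$. For any $k \in \ZZ^d$,
\[
\tilde\phi(x + 2\pi k)
= \euler^{2 \pi i \gamma \cdot k}\tilde\phi(x)
= \euler^{i (\theta + 2 \pi n) \cdot k}\tilde\phi(x)
= \euler^{i \theta \cdot k}\tilde\phi(x),
\]
so $\tilde\phi$ satisfies the pseudoperiodicity condition, and hence $\phi \in \cD(\Delta_\theta)$. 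A direct differentiation gives $\Delta \tilde\phi = - \lvert\gamma\rvert^2 \tilde\phi$, so $\Delta_\theta \phi = -\lvert\gamma\rvert^2 \phi$, in agreement with the lemma up to the sign convention for the Laplacian in use throughout the paper.

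Second, to establish orthogonality and completeness, I would introduce the multiplication operator $U$ on $L^2(\TT^d)$ defined by $(U \phi)(y) := \euler^{-i \theta \cdot y/(2\pi)} \phi(y)$. Although the factor $\euler^{-i \theta \cdot y/(2\pi)}$ is not $2\pi \ZZ^d$-periodic in general, it is pointwise unimodular, so $U$ extends to a bijective isometry of $L^2(\TT^d)$. Moreover, $U$ sends $\euler^{i \gamma y}$ to $\euler^{i n y}$, where $n = \gamma - \theta/(2\pi) \in \ZZ^d$. Since $\bigl( (2\pi)^{-d/2} \euler^{i n y} \bigr)_{n \in \ZZ^d}$ is the standard orthonormal Fourier basis of $L^2(\TT^d)$, applying $U^{-1}$ transports it to an orthonormal basis $\bigl( (2\pi)^{-d/2} \euler^{i \gamma y}\bigr)_{\gamma \in \Gamma_\theta}$.

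Combining the two points above with the self-adjointness of $\Delta_\theta$ on its stated domain yields the asserted spectral decomposition. The argument is essentially a routine calculation; the one mildly subtle point is to realise that conjugation by the non-periodic but unimodular factor $\euler^{-i\theta \cdot y/(2\pi)}$ converts the $\theta$-pseudoperiodic setting into the ordinary periodic Fourier setting on $\TT^d$, after which the claim reduces to the familiar orthonormality and completeness of the standard Fourier basis.
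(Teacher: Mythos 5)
Your proof is correct and follows essentially the same route as the paper's (very terse) proof: the paper likewise deduces that $(\euler^{i\gamma y})_{\gamma\in\Gamma_\theta}$ is an orthogonal basis from the standard Fourier basis $(\euler^{iky})_{k\in\ZZ^d}$ (your unitary multiplication by the unimodular factor makes this step explicit) and checks the eigenvalue property and pseudoperiodicity by direct computation. Your remark about the sign of the eigenvalue ($-\lvert\gamma\rvert^2$ for $\Delta_\theta$ versus the stated $\lvert\gamma\rvert^2$) is a fair observation about the paper's sign convention and does not affect the argument.
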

\begin{proof}
Since $(\euler^{i k y})_{k \in \ZZ^d}$ is an orthogonal basis of $L^2(\TT^d)$, so is $(\euler^{i \gamma y})_{\gamma \in \Gamma_\theta}$.
The eigenvalue property and the $\theta$-pseudoperiodicity are verified by explicit computation.
\end{proof}
	By isometry of the the Floquet transform, the l.h.s. of~\eqref{eq:observability} becomes
	\begin{align*}
	\int_0^T
	\int_S
	\lvert 
	(\euler^{i t \Delta} u) (x)
	\rvert^2
	\drm x\
	\drm t
	&=
	\int_0^T
	\drm t
	\left\langle
	\euler^{i t \Delta} u
	,
	\chiS
	\euler^{i t \Delta} u
	\right\rangle_{\RR^d}
	\\
	&=
	\int_0^T
	\drm t
	\left\langle
	\cF (\euler^{i t \Delta} u)
	,
	\cF
	(\chiS
	\euler^{i t \Delta} u)
	\right\rangle_{\TT^d \times \TT^d}
	\\
	&=
	\int_{\TT^d}
	\drm \theta
	\int_0^T
	\drm t
	\left\langle
	\euler^{i t \Delta_\theta} (\cF u)(\cdot,\theta)
	,
	\mathbf{1}_{B_R}
	\euler^{i t \Delta_\theta} (\cF u)(\cdot,\theta)
	\right\rangle_{\TT^d}
	\\
	&=
	\int_{\TT^d}
	\drm \theta
	\int_0^T
	\drm t
	\int_{B_R}
	\drm y
	\lvert
	\euler^{i t \Delta_\theta} (\cF u)(y,\theta)
	\rvert^2
	\\
	&=
	\int_{\TT^d}
	\drm \theta
	\int_0^T
	\drm t
	\int_{B_R}
	\drm y
	\big\lvert
	\sum_{\gamma \in \Gamma_\theta}
	\euler^{i (t \lvert \gamma \rvert^2 + y \gamma)}
	\alpha_{\theta, \tilde \gamma}
	\big\rvert^2
	\end{align*}
	where the $\alpha_\theta = (\alpha_{\theta, \tilde \gamma})_{\gamma \in \Gamma_\theta} \in \ell^2(\tilde \Gamma_\theta)$ are the Fourier coefficients of 
	$\TT^d \ni y \mapsto \cF (u) (y, \theta) \in L^2(\TT^d)$
	with respect to the eigenbasis from Lemma~\ref{lem:eigenbasis} for fixed $\theta \in \TT^d$.
	Writing
	\[
	t \lvert \gamma \rvert^2 + y \gamma 
	=
	\left\langle
	\begin{pmatrix}
	y\\
	t\\
	\end{pmatrix}
	,
	\begin{pmatrix}
	\gamma \\
	\lvert \gamma \rvert^2\\
	\end{pmatrix}
	\right\rangle
	=:
	z \cdot \tilde \gamma,
	\]
	for $\tilde \gamma := (\gamma, \lvert \gamma \rvert^2) \in \RR^{d+1}$, using unitarity of the Schr\"odinger group to shift the integration in $t$ from $[0,T]$ to $[-\frac{T}{2}, \frac{T}{2}]$ and using that, possibly after shrinking R, $[-\frac{T}{2}, \frac{T}{2}] \times B_R$ certainly contains the $(d+1)$-dimensional ball of radius $R$ around the origin (which we again denote by $B_R$), we can lower bound this by
	\[
	\int_{B_R}
	\big\lvert
	\sum_{\tilde \gamma \in \tilde \Gamma_\theta}
	\exp \left( z \cdot \tilde \gamma \right)
	\alpha_{\theta, \tilde \gamma}
	\big\rvert^2
	\drm z.
	\]
	We have thus reduced the proof of the observability estimate~\eqref{eq:observability} to the following statement:
	There exists $C > 0$ such that for all $\theta \in \TT^d$ one has
	\begin{equation}
	\label{eq:exponential_sum}
	\int_{B_R}
	\big\lvert
	\sum_{\tilde \gamma \in \tilde \Gamma_\theta}
	\exp \left( z \cdot \tilde \gamma \right)
	\alpha_{\theta, \tilde \gamma}
	\big\rvert^2
	\drm z
	\geq
	C
	\lVert \alpha_\theta \rVert_{\ell^2(\tilde \Gamma_\theta)}^2
	\quad
	\text{for all $\alpha_\theta \in \ell^2(\tilde \Gamma_\theta)$},
	\end{equation}
	where
	\[
	\tilde \Gamma_\theta 
	:=
	\left\{
		\begin{pmatrix}
			\gamma \\ \lvert \gamma \rvert^2
		\end{pmatrix}
		\colon
		\gamma \in \frac{\theta}{2 \pi} + \ZZ^d
	\right\}
	\subset \RR^{d+1}.
	\]
	
%
	
	\subsection{Lower bounds on exponential sums}
	
	We proceed to prove~\eqref{eq:exponential_sum}. 
	For $\theta = 0$, this can be found in the $2$-dimensional case in~\cite{Jaffard-90} as an ingredient in a proof of controllability of the Schr\"odinger equation on rectangles, and for higher dimensions in~\cite{KomornikL-05}.
	While the proofs therein immediately generalize to \emph{fixed} $\theta$, we need to ensure that the resulting constants are $\theta$-independent -- a statement not explicitly provided in~\cite{KomornikL-05}.
	We therefore follow the central parts of the proofs given therein.
	
	For a discrete set $X \subset \RR^{d+1}$, let its \emph{gap} be given by
	\[
	\gap(X) 
	:=
	\inf
	\{
	\lvert x - y \rvert
	\colon
	x, y \in X, x \neq y
	\}.
	\]	
	The following lemma can be inferred in this form from Theorem~8.1 in~\cite{KomornikL-05}, see also~\cite[Proposition~1]{Jaffard-90}.
	In one dimension, the lemma is due to Ingham~\cite{Ingham-36}, while the higher dimensional case is essentially due to Kahane~\cite{Kahane-62}, see also~\cite{BaiocchiKL-99}.

	\begin{lemma}
	\label{lem:Ingham}
	There is $c > 0$, depending only on the dimension such that for all $\delta > 0$ there is $\tilde C > 0$, depending only on the dimension and on $\delta$, such that for all $R \geq \frac{c}{\delta}$ and all $\Gamma \subset \RR^{d+1}$ with $\gap(\Gamma) \geq \delta$ we have
	\[
	\int_{B_R} \lvert \sum_{\gamma \in \Gamma} \alpha_\gamma \exp ( i \gamma x ) \rvert^2 \drm x 
	\geq
	\tilde C
	\sum_{\gamma \in \Gamma} \lvert \alpha_\gamma \rvert^2
	\quad
	\text{for all $\alpha \in \ell^2( \Gamma)$}.
	\] 
	\end{lemma}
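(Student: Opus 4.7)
The plan is to follow the classical multiplier argument of Ingham, Beurling, Kahane, and Jaffard, carefully tracking constants so that the final bound depends only on $d$ and $\delta$, uniformly in $\Gamma$. The key object is an autocorrelation $\Phi_R$: fix once and for all a real, nonnegative, smooth bump $\psi \in C_c^\infty(\RR^{d+1})$ supported in $B_{1/2}$ with $\int \psi > 0$, set $\Psi_R(x) := \psi(x/R)$ (supported in $B_{R/2}$), and define $\Phi_R(x) := \int \Psi_R(y)\Psi_R(y-x)\,\drm y$. Then $\Phi_R$ is real, even, nonnegative and supported in $B_R$; its Fourier transform $\hat\Phi_R = |\hat\Psi_R|^2 \geq 0$ satisfies $\hat\Phi_R(\xi) = R^{2(d+1)}|\hat\psi(R\xi)|^2$ by scaling, so $\hat\Phi_R(0) \gtrsim_d R^{2(d+1)}$ while $\hat\Phi_R$ enjoys Schwartz decay for $|\xi| \gg 1/R$; moreover Cauchy--Schwarz gives $\|\Phi_R\|_\infty \leq \|\Psi_R\|_{L^2}^2 \lesssim_d R^{d+1}$.

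The next step uses this multiplier to lower-bound the left-hand side. From $\Phi_R \leq \|\Phi_R\|_\infty \mathbf{1}_{B_R}$ and termwise expansion (justified initially for finite sums, then extended by density via the easier Bessel-type upper bound) one obtains
\[
\int_{B_R}\Bigl|\sum_{\gamma \in \Gamma}\alpha_\gamma \euler^{i \gamma \cdot x}\Bigr|^2 \drm x
\geq \|\Phi_R\|_\infty^{-1} \sum_{\gamma,\gamma' \in \Gamma}\alpha_\gamma \overline{\alpha_{\gamma'}}\, \hat\Phi_R(\gamma - \gamma').
\]
The diagonal contribution $\|\Phi_R\|_\infty^{-1}\hat\Phi_R(0)\sum|\alpha_\gamma|^2 \gtrsim_d R^{d+1}\sum|\alpha_\gamma|^2$ already has the desired form, and Schur's test bounds the off-diagonal contribution by $\|\Phi_R\|_\infty^{-1}\bigl(\sup_\gamma \sum_{\gamma' \neq \gamma}|\hat\Phi_R(\gamma - \gamma')|\bigr)\sum|\alpha_\gamma|^2$.

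The main obstacle is to control this supremum purely in terms of $d$ and $\delta$, with no reference to the internal structure of $\Gamma$. For each fixed $\gamma$ the translate $\gamma - \Gamma\setminus\{0\}$ is $\delta$-separated and avoids $B_\delta$; a standard volume-packing argument yields $\#\{\eta \in \gamma - \Gamma : k\delta \leq |\eta| < (k+1)\delta\} \lesssim_d k^d$, uniformly in $\gamma$ and $\Gamma$. Combined with the Schwartz bound $|\hat\psi(\eta)| \leq C_N(1 + |\eta|)^{-N}$ this gives
\[
\sup_\gamma \sum_{\gamma' \neq \gamma}|\hat\Phi_R(\gamma - \gamma')|
\lesssim_{d,N} R^{2(d+1)}\sum_{k \geq 1} k^d (1 + Rk\delta)^{-2N}.
\]
Choosing $N = N(d)$ large enough and then $c = c(d)$ so that $R\delta \geq c$ forces this quantity to be at most $\tfrac{1}{2}\hat\Phi_R(0)$ absorbs the off-diagonal into half the diagonal. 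The estimate $\int_{B_R}\lvert\cdot\rvert^2 \drm x \gtrsim_d R^{d+1}\sum|\alpha_\gamma|^2$ then holds at $R = c/\delta$, and monotonicity of the left-hand side in $R$ propagates it to every $R \geq c/\delta$ with final constant $\tilde C \simeq_d \delta^{-(d+1)}$.
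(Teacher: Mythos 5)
Your proof is correct, but it takes a different route from the paper, which does not prove Lemma~\ref{lem:Ingham} at all: it infers it from Theorem~8.1 in the book of Komornik and Loreti (ultimately Kahane's higher-dimensional Ingham theorem), whose proof uses a carefully optimized multiplier built from the first Dirichlet eigenfunction of the ball --- this is what produces the sharp threshold $c$ equal to the first positive zero of the Bessel function $J_{(d-1)/2}$ recorded in Remark~\ref{rem:Bessel}. You instead run the same multiplier scheme with a generic smooth autocorrelation bump: the diagonal term gives $\|\Phi_R\|_\infty^{-1}\hat\Phi_R(0)\gtrsim_d R^{d+1}$, the off-diagonal is controlled by Schur's test together with the volume-packing count $\lesssim_d k^d$ for a $\delta$-separated set in the $k$-th annulus and the Schwartz decay of $\hat\psi$, and it is absorbed into half the diagonal once $R\delta\geq c(d)$. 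All constants visibly depend only on $d$ and $\delta$ (never on $\Gamma$), the passage from finite to general $\ell^2$ coefficient sequences via the Bessel-type upper bound is standard, and monotonicity in $R$ gives the uniform $\tilde C$ for all $R\geq c/\delta$ that the paper otherwise arranges by slightly enlarging $c$. What you lose relative to the cited proof is only the optimal value of $c$, which the lemma as stated (and its use in Lemmas~\ref{lem:several_sets} and~\ref{lem:decomposition}) does not require; what you gain is a short, self-contained argument with an explicit $\tilde C\simeq_d\delta^{-(d+1)}$.
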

	
\begin{remark}
	\label{rem:Bessel}
	There is an explicit expression for the infimum of all possible $c$, see the remark after~\cite[Theorem~8.1]{KomornikL-05}.
	More precisely, in dimension $d \geq 2$, the infimal $c$ is the first positive root of $J_{\frac{d-1}{2}}$, the Bessel function of first kind with parameter $\frac{d-1}{2}$.
	Choosing a strictly larger $c$, one obtains a uniform $\tilde C$ for all $R \geq \frac{c}{\delta}$.
\end{remark}

	By demanding $R \geq \frac{c}{\delta}$, Lemma~\ref{lem:Ingham} imposes a reciprocal relation between the radius $R$ of the ball and the gap $\delta$.
	Since the gap of $\Gamma_\theta$ is of order one, this would exclude small $R$.
	The following lemma remedies this by allowing to decompose a discrete $\Gamma$ into finitely many $\Gamma_j$ with larger gaps.

	\begin{lemma}
	\label{lem:several_sets}
	Let $R > 0$, $N \in \NN$, and $c > 0$ be the constant from Lemma~\ref{lem:Ingham}.
	Let $\Gamma = \Gamma_1 \cup \dots \cup \Gamma_N$ be such that $\gap(\Gamma) \geq 1$, and $\gap(\Gamma_i) \geq \delta_i$.
	Then, there is $C > 0$, depending only on the dimension, on $N$, and on the $\delta_i$ such that for all 
	\begin{equation}
	\label{eq:prefactor_2}
	R 
	\geq
	2
	\left( \frac{c}{\delta_1} + \dots + \frac{c}{\delta_N} \right)
	\end{equation}
	 we have
	\[
	\int_{B_R} \lvert \sum_{\gamma \in \Gamma} \alpha_\gamma \exp ( i \gamma x ) \rvert^2 \drm x 
	\geq
	C
	\sum_{\gamma \in \Gamma} \lvert \alpha_\gamma \rvert^2
	\quad
	\text{for all $\alpha \in \ell^2(\tilde \Gamma)$}.
	\] 
	\end{lemma}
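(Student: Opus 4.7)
I would prove this by induction on $N$, following the strategy of Komornik--Loreti. The base case $N=1$ is immediate from Lemma~\ref{lem:Ingham}: the assumed radius $R \geq 2c/\delta_1$ lies above the threshold $c/\delta_1$, so that lemma produces the desired bound with a constant depending only on the dimension and $\delta_1$.

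For the inductive step, assume the lemma for decompositions into $N-1$ parts, and set $\Gamma' := \Gamma_1 \cup \dots \cup \Gamma_{N-1}$, $R' := 2\sum_{j<N} c/\delta_j$, and $R_N := c/\delta_N$, so that the hypothesis on $R$ rewrites as $R \geq R' + 2 R_N$. Split the exponential sum as $f = f' + f_N$ with $f_N(x) := \sum_{\gamma \in \Gamma_N} \alpha_\gamma \euler^{i\gamma x}$ and $f' := f - f_N$. Two building blocks are then directly available: the inductive hypothesis applied to $\Gamma'$ on the ball $B_{R'}$ gives $\int_{B_{R'}} |f'|^2 \drm x \gtrsim \sum_{\gamma \in \Gamma'} |\alpha_\gamma|^2$, and Lemma~\ref{lem:Ingham} applied to $\Gamma_N$ alone (gap $\geq \delta_N$) on $B_{R_N}$ gives $\int_{B_{R_N}} |f_N|^2 \drm x \gtrsim \sum_{\gamma \in \Gamma_N} |\alpha_\gamma|^2$. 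Consequently, it suffices to establish a separation estimate of the form
\[
\int_{B_R} |f|^2 \drm x
\;\gtrsim\;
\int_{B_{R'}} |f'|^2 \drm x \;+\; \int_{B_{R_N}} |f_N|^2 \drm x,
\]
with a constant depending only on the dimension, on $N$, and on the $\delta_i$.

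To establish this separation I would expand $\int_{B_R} |f' + f_N|^2 \drm x$ as a diagonal part plus the cross term $2\Re \int_{B_R} f' \overline{f_N}\,\drm x$, and control the cross term via the hypothesis $\gap(\Gamma) \geq 1$: since any $\gamma' \in \Gamma'$ and $\gamma_N \in \Gamma_N$ satisfy $|\gamma' - \gamma_N| \geq 1$, the elementary oscillatory integrals $\int_{B_R} \euler^{i(\gamma' - \gamma_N)x}\drm x$ decay in their argument, and combined with Bessel-type upper bounds for $f'$ and $f_N$ (the companion inequalities to Ingham) they can be absorbed into the diagonal contributions. The main obstacle is precisely this cross-term analysis: on a ball, as opposed to a torus, distinct exponentials are \emph{not} orthogonal, and a quantitative estimate exploiting $\gap(\Gamma) \geq 1$ is what makes the lemma nontrivial beyond a direct application of Lemma~\ref{lem:Ingham}. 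A secondary, more bookkeeping difficulty is to track all constants explicitly so that $C$ depends only on the dimension, on $N$, and on the $\delta_i$ — a uniformity that is indispensable for the uniform-in-$\theta$ application in the Floquet--Bloch reduction of Section~\ref{sec:Floquet}.
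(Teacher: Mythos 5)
First, a point of reference: the paper does not actually prove Lemma~\ref{lem:several_sets}. It defers the proof to \cite[Proposition~8.4]{KomornikL-05} (going back to Kahane~\cite{Kahane-62}) in combination with Lemma~\ref{lem:Ingham}; the only original content in the surrounding text is the observation that the prefactor $2$ in~\eqref{eq:prefactor_2} permits a choice of $C$ that does not degenerate as $R$ approaches the threshold $\sum_j c/\delta_j$. Your inductive skeleton --- adjoin one $\Gamma_j$ at a time, use Lemma~\ref{lem:Ingham} for the piece being adjoined and the inductive hypothesis for the rest, with the admissible radii adding up --- is indeed the architecture of that cited result, and you correctly identify the non-orthogonality of distinct exponentials on a ball as the crux.

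However, your proposed treatment of that crux has a genuine gap: expanding $\int_{B_R}\lvert f'+f_N\rvert^2$ and absorbing the cross term via the decay of $\int_{B_R}\euler^{i(\gamma'-\gamma_N)x}\,\drm x$ cannot work. That oscillatory integral is $\widehat{\mathbf{1}_{B_R}}(\gamma'-\gamma_N)$, which in the ambient space $\RR^{d+1}$ decays only like $\lvert\gamma'-\gamma_N\rvert^{-(d+2)/2}$, whereas a $1$-separated set has of order $k^{d}$ points at distance comparable to $k$ from any fixed $\gamma'$; the resulting row sums behave like $\sum_k k^{d}\cdot k^{-(d+2)/2}=\sum_k k^{(d-2)/2}$, which diverges for every $d\geq 1$, so a Schur-type absorption is unavailable. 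Even if the off-diagonal kernel were summable, its size would be a fixed constant of order one --- neither $R$ nor the gap $1$ is a large parameter that could be tuned --- so it could not be made small relative to the diagonal terms, whose lower bounds from Lemma~\ref{lem:Ingham} carry small constants; and the crude alternative $\lvert\text{cross}\rvert\leq\varepsilon\int\lvert f'\rvert^2+\varepsilon^{-1}\int\lvert f_N\rvert^2$ leaves a negative coefficient on the $f_N$ contribution and proves nothing. The mechanism that actually closes the induction in \cite{Kahane-62,KomornikL-05} is of a different nature: a Haraux--Kahane translation-averaging argument, in which one applies to $f$ an operator $f\mapsto\int f(\cdot+t)\,\drm\mu(t)$ with $\mu$ supported in a ball of radius comparable to $c/\delta_N$, chosen (using the direct, Bessel-type inequality for $\Gamma_N$ together with Lemma~\ref{lem:Ingham} for $\Gamma_N$) so that on the frequency side the $\Gamma_N$-coefficients are cancelled while the $\Gamma'$-coefficients are only multiplied by factors bounded below in terms of $\gap(\Gamma)\geq 1$; this reduces matters to $f'$ on the enlarged ball of radius $R'+2c/\delta_N$ and is precisely where the additivity of the radii in~\eqref{eq:prefactor_2} originates. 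Without this (or an equivalent) device, your separation estimate --- and hence the lemma --- remains unproved.
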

	
	Here, we use the convention that for a one-element set $\gap ( \Gamma ) = \infty$ and  $\frac{1}{\infty} = 0$.
		A qualitative variant of Lemma~\ref{lem:several_sets} is due to~\cite{Kahane-62}, whereas we refer here again to~\cite[Proposition~8.4]{KomornikL-05} from which the quantitative version stated above can be inferred in combination with Lemma~\ref{lem:Ingham}.
	While the statement a priori holds for all
	\[
	R 
	>
	\left( \frac{c}{\delta_1} + \dots + \frac{c}{\delta_N} \right),	
	\]
	the resulting constant $C$ will depend on a $\epsilon > 0$ with 
	\[
	R = 
	\left( 
		\frac{c}{\delta_1} + \dots + \frac{c}{\delta_N} 
	\right)
	+
	\epsilon,
	\]
	chosen in the proof in~\cite{KomornikL-05}.
	The prefactor $2$ in~\eqref{eq:prefactor_2} thus allows for a uniform choice, depending only on the dimension, on $N$, and on the $\delta_i$.

Now, the next lemma provides an appropriate decomposition of the set $\Gamma_\theta$
into appropriate subsets.
For $p = 2$, the $\theta$-independent version is in \cite{Jaffard-90} in dimension $d = 2$ and in \cite{KomornikL-05} in the higher-dimensional case.
A $\theta$-independent version for $p > 1$ can also be obtained as a corollary of~\cite[{Th\'eor\`eme~III.4.1}]{Kahane-62}.
However, for the required $\theta$-independence of the constant $C$ in Lemma~\ref{lem:several_sets}, we provide a proof here.

\begin{lemma}
	\label{lem:decomposition}
	Let $R > 0$, and $p > 1$.
	There is $N \in \NN$ such that for all $\theta \in \TT^d$, we can decompose
	\[
	\tilde \Gamma_\theta 
	=
	\left\{
		\begin{pmatrix}
			\gamma \\ \lvert \gamma \rvert^2
		\end{pmatrix}
		\colon
		\gamma \in \frac{\theta}{2 \pi} + \ZZ^d
	\right\} 
	=
	\bigcup_{j = 1}^N
	\tilde \Gamma_{\theta, j} 
	\]
	where for $\delta_j := \gap(\tilde \Gamma_{\theta, j})$ one has
	\[
	2
	\left( \frac{c}{\delta_1} + \dots + \frac{c}{\delta_N} \right)
	\leq
	R
	\]
	\end{lemma}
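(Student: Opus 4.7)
I would try to produce the decomposition by combining a ``core'' of finitely many singletons with a bounded number of ``tail'' classes on which the image on the paraboloid has a sufficiently large gap in $\RR^{d+1}$. A central structural fact is that $L := \tfrac{\theta}{2\pi}+\ZZ^d$ is a translate of $\ZZ^d$, so all cardinality counts, coset counts and lattice-distance bounds are $\theta$-independent; $\theta$ only shifts sums $\gamma_1+\gamma_2$ by the bounded vector $\theta/\pi$, and this shift enters only the last coordinate via the identity $|\gamma_1|^2-|\gamma_2|^2=(\gamma_1-\gamma_2)\cdot(\gamma_1+\gamma_2)$.

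Concretely, I would fix a threshold $M=M(R,d,p)>0$ and a modulus $m=m(R,d,p)\in\NN$, both to be tuned at the end. The core consists of each $\gamma\in L$ with $|\gamma|\le M$ placed in its own singleton class; there are at most $C_d(M+1)^d$ such classes, each with $\delta_j=\infty$, contributing $0$ to $\sum_j c/\delta_j$. The tail $\{\gamma\in L:|\gamma|>M\}$ I would then partition into coset classes $C_r:=\{\gamma\in L:|\gamma|>M,\ \gamma-r\in m\ZZ^d\}$ for $r$ ranging over a transversal of $\ZZ^d/m\ZZ^d$, yielding at most $m^d$ tail classes. On each $C_r$, any two distinct $\gamma_1,\gamma_2$ satisfy $|\gamma_1-\gamma_2|\ge m$, hence their images on the paraboloid have Euclidean gap $\ge m$ in $\RR^{d+1}$ just from the first $d$ coordinates.

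With this much the tail contribution is $\le m^d\cdot c/m=c\,m^{d-1}$, and for $R$ not too small one can choose $m$ so that $2c\,m^{d-1}\le R$. For smaller $R$ the paraboloid curvature must be invoked to boost the gap on each $C_r$ from $m$ up to order $m M$: for $\gamma_1,\gamma_2\in C_r$ with $|\gamma_1-\gamma_2|=m$, the last-coordinate separation $(\gamma_1-\gamma_2)\cdot(\gamma_1+\gamma_2)$ is $\gtrsim m M$ except for the ``tangential'' case where $\gamma_1-\gamma_2$ is nearly perpendicular to $\gamma_1+\gamma_2$. Such tangential pairs lie approximately on a common sphere around the origin, and I would exclude them by further subdividing each $C_r$ along a bounded number of angular sectors of $S^{d-1}$ (or, alternatively, along a dyadic shell decomposition in $|\gamma|$). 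The hypothesis $p>1$ would enter as the summability exponent ensuring convergence of the corresponding geometric series over such shells.

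The main obstacle I anticipate is proving the paraboloid-curvature lower bound $(\gamma_1-\gamma_2)\cdot(\gamma_1+\gamma_2)\gtrsim m M$ uniformly in $\theta$ on each tangential-free sub-class. The $\theta$-dependence here is mild, because translating $L$ by $\tfrac{\theta}{2\pi}$ shifts $\gamma_1+\gamma_2$ only by the fixed bounded vector $\theta/\pi$, of norm $\le\sqrt{d}$, which is absorbed into the threshold $M$ by requiring $M\gtrsim 1$. The combinatorial content then reduces to a classical lattice-point estimate: primitive lattice vectors in $\ZZ^d$ whose direction is nearly tangential to a sphere of radius $M$ around the origin form only a lower-dimensional portion of $S^{d-1}$ and can therefore be covered by boundedly many sectors, uniformly in $\theta$.
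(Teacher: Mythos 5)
Your decomposition is genuinely different from the paper's. The paper splits $\tilde\Gamma_\theta$ according to position relative to the coordinate axes: (i) points all of whose first $d$ coordinates exceed $\alpha$ in modulus, where one lattice step in the $k$-th direction changes $\lvert\gamma\rvert^2$ by $\lvert 2\gamma_k\pm1\rvert\geq 2\alpha-1$; (ii) finitely many ``columns'' along the axes with fixed transverse coordinates, along which the last coordinate grows quadratically, so their gap is $\gtrsim\beta$ with $\beta$ free to be taken huge; (iii) a finite rest of singletons. Your scheme (singleton core plus congruence classes mod $m$ on the tail, refined by angular sectors or dyadic shells) founders exactly on the step you flag as the main obstacle: the curvature boost from gap $m$ to gap $\sim mM$ cannot be achieved by discarding ``tangential'' pairs via sectors or shells.

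The reason is that two points of $\tilde\Gamma_\theta$ coming from lattice points on a common sphere about the origin have identical last coordinates, so their gap in $\RR^{d+1}$ equals their distance in $\RR^d$, and such pairs occur inside a single congruence class, a single arbitrarily narrow cone, and a single dyadic shell, at every scale. Concretely, for $d=2$ and $\theta=0$, the points $\gamma_1=(a,a+m)$ and $\gamma_2=(a+m,a)$ are congruent mod $m$, satisfy $\lvert\gamma_1\rvert=\lvert\gamma_2\rvert$, lie within angle $O(m/a)$ of the diagonal, and give $\lvert\tilde\gamma_1-\tilde\gamma_2\rvert=\sqrt{2}\,m$ for every $a$. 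More generally, the bad directions are $v^{\perp}\cap S^{d-1}$ for short $v\in m\ZZ^d$, and their union is \emph{dense} in $S^{d-1}$, so no finite sector decomposition avoids them; your appeal to a lower-dimensional set of tangential primitive vectors conflates a condition on the pair $(\gamma_1-\gamma_2,\gamma_1+\gamma_2)$ with a condition on a single fixed direction. Note also that without the boost the bookkeeping is self-defeating for $d\geq2$: $m^d$ classes of gap $\asymp m$ give $\sum_j c/\delta_j\asymp c\,m^{d-1}$, which \emph{increases} with $m$, so your ``$R$ not too small'' regime is really just $m=1$ and $R\geq 2c$; and the exponent $p$ is vestigial here (the lemma is specialized to $\lvert\gamma\rvert^2$), not a shell-summability parameter. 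You have, however, correctly located the crux of the lemma: the paper's own verification of the gap of $A_\alpha$ in step (i) only tests the neighbours $\gamma\pm e_k$ and does not address these same-sphere pairs (both $(a,a+m)$ and $(a+m,a)$ lie in $A_\alpha$ once $a\geq\alpha$), so any complete argument must separate nearly-equal-modulus points into different classes rather than discard them by direction.
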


\begin{proof}
	We split $\tilde \Gamma_\theta$ into three components.
	
	\emph{1. Points with $(x_1, \dots, x_d)$ bounded away from the axes.}
	Defining for $\alpha \geq 1$ 
	\[
	A_\alpha
	:= 
	\left\{
	x \in \RR^{d+1} 
	\colon
	\lvert x_j \rvert \geq \alpha
	\
	\forall j \in \{1,\dots d\}
	\right\}
	\]
	one has
	\[
	\gap(\tilde \Gamma_\theta \cap A_\alpha) 
	\geq 
	\alpha.
	\]
	Indeed, let $\gamma \in \Gamma_\theta$ with $( \gamma, \lvert \gamma \rvert^p) \in A_\alpha$ and $e_k = (0, \dots, 0,\pm 1,0, \dots, 0) \in\ZZ^d$ be the $k$-th unit vector.
	Then, 
	\begin{align*}
	\left\lvert 		
		\begin{pmatrix}
			\gamma \\ \lvert \gamma \rvert^2
		\end{pmatrix}
	-
		\begin{pmatrix}
			\gamma + e_k \\ \lvert \gamma + e_k \rvert^2
		\end{pmatrix}
	\right\rvert^2
	&\geq
	\left(
	\lvert \gamma \rvert^2 - \lvert \gamma + e_k \rvert^2 
	\right)^2.
	=
	\left( \lvert \gamma \rvert^2 - \lvert \gamma \pm e_k \rvert^2 \right)^2
	\\
    &
	\geq
	\left(
		\gamma_k^2
		-
		(\gamma_k \pm 1) ^2
	\right)^2
	\geq
	(\pm 2 \gamma_k + 1)^2
	\geq
	(\alpha - 1)^2
	\geq
	\alpha^2.
	\end{align*}
	Thus, choosing
	\[
	\alpha 
	= 
		\max \left\{ \frac{6 c}{R}, 1 \right\}
	\] 
	and setting $\tilde \Gamma_{\theta,1} = A_\alpha \cap \tilde \Gamma_\theta$ ensures $\frac{2 c}{\delta_1} \leq \frac{R}{3}$.
	
	\emph{2. Neighbourhoods of the axes in the first $d$ coordinates}.
	Next, for $\beta > \alpha$ let 
	\[
	B_{\alpha, \beta} 
	:= 
	A_\alpha^c \cap \{ x \in \RR^{d+1} \colon \exists j \in \{1, \dots, d \} \colon \lvert x_j \rvert \geq \beta \}.
	\]
	The set $B_{\alpha, \beta}$ is a union of $2 d$ many slabs, consisting of rectangular neighbourhoods around positive the $x_1$ to $x_d$ axes in $\RR^d$, and extending infinitely in the $x_{d+1}$ direction.
	\begin{figure}[ht]
		\begin{tikzpicture}[scale =.5]
			
			\draw (7,5) node {$\RR^d$};
			
						\draw[fill = black!20] (-4.5,-2.5) -- (-2.5,-2.5) -- (-2.5,-4.5) -- (2.5,-4.5) -- (2.5,-2.5) -- (4.5,-2.5) -- (4.5,2.5) -- (2.5,2.5) -- (2.5,4.5) -- (-2.5,4.5) -- (-2.5,2.5) -- (-4.5,2.5);

			\draw[->] (-6,0) -- (6.5,0);
			\draw[->] (0,-6) -- (0,6.5);
			\foreach \x in {-6,...,5}
				{
				\foreach \y in {-6,...,5}
					{\draw[] (\x+.2,\y+.3) circle (2pt);}
				}
			
			\draw[thick] (-6,-2.5) -- (-2.5,-2.5) -- (-2.5,-6);
			\draw[thick] (-6,2.5) -- (-2.5,2.5) -- (-2.5,6);
			\draw[thick] (2.5,-6) -- (2.5,-2.5) -- (6,-2.5);
			\draw[thick] (2.5,6) -- (2.5,2.5) -- (6,2.5);
			
			\draw[thick] (-4.5,-2.5) -- (-4.5,2.5);
			\draw[thick] (4.5,-2.5) -- (4.5,2.5);
			\draw[thick] (-2.5,4.5) -- (2.5,4.5);
			\draw[thick] (-2.5,-4.5) -- (2.5,-4.5);

			\fill[pattern = north east lines] (-2.5,4.5) rectangle (2.5,6);
			\fill[pattern = north east lines] (-2.5,-4.5) rectangle (2.5,-6);
			\fill[pattern = north east lines] (-6,-2.5) rectangle (-4.5,2.5);
			\fill[pattern = north east lines] (4.5,-2.5) rectangle (6,2.5);

			\draw (2.5,-.1) -- (2.5,.1);
			\draw (2.5,-.4) node {\tiny$\alpha$};
			\draw (4.5,-.1) -- (4.5,.1);
			\draw (4.3,-.4) node {\tiny$\beta$};

			\begin{scope}[xshift = 7cm]
			\draw[thick] (1,1) rectangle (3,2);
			\draw[anchor = west] (4,1.5) node {$A_\alpha$};
			\draw[thick, pattern = north east lines] (1,-.5) rectangle (3,.5);
			\draw[anchor = west] (4,0) node {$B_{\alpha, \beta}$};			
			\draw[thick, fill = black!20] (1,-2) rectangle (3,-1);
			\draw[anchor = west] (4,-1.5) node {Rest};

			\end{scope}
		\end{tikzpicture}

		\caption{Projections onto $\RR^d$ of the decomposition of $\tilde \Gamma_\theta$.}
		\label{fig:decomposition}	
	\end{figure}
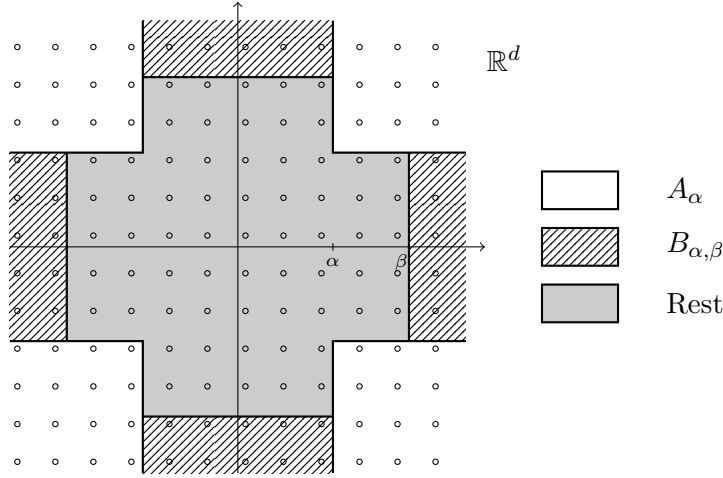
	Let $B_{\alpha, \beta,k}^{\pm} \subset B_{\alpha, \beta}$ denote the union of the two components which are arranged along the positive and negative part of the $x_k$-axis.
	Then, 
	\[
	\tilde \Gamma_\theta \cap B_{\alpha, \beta,k}^{\pm}
	=
	\left\{
		\begin{pmatrix}
			\gamma \\ \lvert \gamma \rvert^2
		\end{pmatrix}
		\colon
		\begin{aligned}
		&\gamma \in \ZZ^d + \theta,\
		\\
		&\lvert \gamma_k + \frac{\theta_k}{2 \pi} \rvert \geq \beta,
		\\
		&\lvert \gamma_i + \frac{\theta_i}{2 \pi} \rvert \leq \alpha 
		\text{ for all } i \neq k 
		\end{aligned}
	\right\}
	\]
	and we see that for sufficiently large $\beta$, the distance to the next neighbour in $x_k$ direction grows quadratically.
	Since $B_{\alpha, \beta,k}^{\pm}$ has a finite profile in $(x_1, \dots x_{k-1}, x_{k+1}, \dots, x_d)$ direction, we can split $B_{\alpha, \beta,k}^{\pm} \cap \tilde \Gamma_\theta$ into 
	\[
	N_{\alpha}
	:=
	\# 
	\{ 
	[- \alpha - 1, \alpha + 1]^{d-1}  
	\cap
    \ZZ^{d-1} + (\theta_1, \dots, \theta_{k-1}, \theta_{k+1}, \dots, \theta_d)
	\}
	\]
	many components, each with fixed $(x_1, \dots, x_{k-1}, x_{k+1}, \dots, x_d)$ coordinates, and with gap at least
	\[
	\lvert \beta + 1 \rvert^2 - \lvert \beta \rvert^2
	=
	2 \beta + \beta^2
	\geq
	3 \beta.
	\]
	Uniting all $d$ many components $\{B_{\alpha, \beta, k}^{\pm}\}_{k = 1}^d$ and choosing 
	\[
	\beta 
	\geq
		\frac{2d N_\alpha c}{R}
	\]
	we ensure
	\[
	2
	\sum_{j = 2}^{d N_\alpha + 1}
	\frac{c}{\delta_j}
	\leq
	\frac{R}{3}.
	\]
	
	\emph{3. The bounded rest}. 
	We have covered all but at most $\left( 2 \beta + 1 \right)^d$ many points of $\tilde \Gamma_\theta$.
	Let each of these points form another $\tilde \Gamma_{\theta, j}$ which has gap equal to $\infty$.
	We have found for each $\theta \in \TT^d$ a decomposition $\tilde \Gamma_\theta = \cup_{j = 1}^N \tilde \Gamma_{\theta, j}$ where $N$ is uniformly bounded in $\theta$ and~\eqref{eq:prefactor_2} holds.
\end{proof}

\begin{proof}[Proof of Theorem~\ref{thm:control_periodic}]
Combining Lemma~\ref{lem:decomposition} with Lemma~\ref{lem:several_sets} yields the $\theta$-independent observability estimate~\eqref{eq:exponential_sum}.
Due to the reductions made at the beginning of this section this proves the observability inequality~\eqref{eq:observability}, and consequently Theorem~\ref{thm:control_periodic}.
\end{proof}

\section{Thickness is necessary for Schr\"odinger}

\label{sec:thickness_necessary}

We give an elementary proof of Theorem~\ref{thm:thickness_necessary} (previously demonstrated using a different method in~\cite{MartinPS-20}) stating that in the case of controllability of the Schr\"odinger equation the control set $S$ must be thick.

\begin{proof}
	By the Hilbert uniqueness method~\cite{Lions-88}, (null-)controllability in time $T$ is equivalent to the observability inequality
 \[
  \lVert u \rVert^2
  \leq
  C
  \int_0^T
  \lVert \mathbf{1}_S \euler^{i t \Delta} u \rVert^2 \drm t
  \quad
  \text{for all}
  \quad
   u \in L^2(\RR^d).
 \]
 Thus, it suffices to find for any $\epsilon > 0$ a function $u \in L^2(\RR^d)$ with 
\[
 \int_0^T
 \lVert \mathbf{1}_S \euler^{i \Delta t} u \rVert^2 
 \drm t
 \leq
 \epsilon
 \lVert u \rVert^2.
\]
Clearly, we have for any $u \in L^2(\RR^d)$
\[
 \int_0^T
 \lVert \mathbf{1}_S \euler^{i \Delta t} u \rVert^2 
 \drm t 
 \leq
 2
 \int_0^T
 \lVert \mathbf{1}_S 
    \left[ 
        \euler^{i \Delta t} - \euler^{\Delta t} 
    \right] 
    u \rVert^2 
 \drm t
 +
 2
 \int_0^T
 \lVert \mathbf{1}_S \euler^{\Delta t} u \rVert^2 
 \drm t
 =
 (A) + (B).
\]
For $E > 0$ to be chosen below let $u = u_1 + u_2$ with $u_1$ having Fourier support in $B_E(0)$ and $u_2$ Fourier support in $B_E^c(0)$.
Since $0 \leq \mathbf{1}_S \leq \operatorname{Id}$ and since the operator norm of $\mathbf{1}_S \left[ \euler^{i \Delta t} - \euler^{\Delta t} \right]$ is at most $2$, we can estimate
\[
 (A)
 \leq
  2
 \int_0^T
 \lVert 
    \left[ 
        \euler^{i \Delta t} - \euler^{\Delta t} 
    \right] 
    u_1 \rVert^2 
 \drm t
 +
 4
 T
 \lVert u_2 \rVert^2 
 =
 (A_1) + (A_2)
\]
By Plancherel, we further estimate
\[
 (A_1) 
 =
 2
 \int_0^T
 \lVert 
 \left[ \euler^{- i \xi^2 t} - \euler^{-\xi^2 t} \right]
 \widehat{u_1}
 \rVert^2
 \drm t
 \leq
 2 T
 \max_{\lvert \xi \rvert \leq E, t \in [0,T]}
 \lvert \euler^{- i \xi^2 t} - \euler^{-\xi^2 t} \rvert
 \cdot
 \lVert u \rVert^2
 \leq
 c T^2 E^2 
 \lVert u \rVert^2.
\]
Taking $E$ sufficiently small (depending only on $T$) this can be made smaller than $\frac{\epsilon}{3} \lVert u \rVert^2$, independently of $u$.

We now choose $u$ as a (square root of a) normalized Gaussian with standard deviation $\nu$, and center $x_0$ that is
\[
 u(x)
 :=
 \frac{1}{\left(2 \pi \nu\right)^{d/4}}   
 \exp
 \left(
   - \frac{\lVert x - x_0 \rVert^2}{4 \nu}
 \right).
\]
where $\nu > 0$, $x_0 \in \RR^d$ will be chosen below.
We have
\[
 \widehat u(\xi)
 =
 \left( \frac{2 \nu}{\pi} \right)^{d/4}
 \exp 
 \left(
   - \xi^2 \nu
 \right),
 \quad
 \text{and}
 \quad
 \widehat u_2(\xi)
 =
 \chi_{\lvert \xi \rvert \geq E}
 \left( \frac{2 \nu}{\pi} \right)^{d/4}
 \exp 
 \left(
   - \xi^2 \nu
 \right).
\]
The function $\hat u$ is the (square root of a) Gaussian with standard deviation proportional to $\nu^{-1}$ and for $\nu \gg 1$, its mass will concentrate on $B_E(0)$. 
Hence, choosing $\nu$ sufficiently large (depending on $E$ which is already fixed), we obtain $(A_2) = 4 T \lVert \widehat u_2 \rVert^2 \leq \frac{\epsilon}{3} \lVert u \rVert^2$.

It remains to estimate $(B)$.
This is the time evolution part of the heat equation and can be made small by putting $x_0$ into a center of a sufficiently large (depending on $\epsilon$ and $\nu$) ball on which $\omega$ has little mass, as has been done for the heat equation, see for instance~\cite{EgidiV-18} for details.
Such a clearing with center $x_0$ exists by assumption since $S$ is assumed not to be thick.
\end{proof}

\section*{Acknowledgements}

The author thanks Kotaro Inami and Soichiro Suzuki for pointing out an error in an earlier version, Walton Green for discussions, and the anonymus referees for helpful comments.


\end{document}